\documentclass[11pt]{article}

\usepackage[margin=1in]{geometry}
\usepackage{amsthm}
\usepackage{graphicx}
\usepackage{amsmath,amssymb}
\usepackage{booktabs}
\usepackage{algorithm}
\usepackage{algpseudocode}
\usepackage{url}
\usepackage{hyperref}

\newcommand{\h}[1]{#1}

\newtheorem{theorem}{Theorem}

\theoremstyle{remark}
\newtheorem{remark}{Remark}
\theoremstyle{plain}
\newtheorem{assumption}{Assumption}

\newcommand{\CVaR}{\operatorname{\mathsf{CVaR}}}

\newcommand{\cbar}{\bar{\boldsymbol{c}}}
\newcommand{\bA}{\boldsymbol{A}}
\newcommand{\bb}{\boldsymbol{b}}
\newcommand{\bx}{\boldsymbol{x}}
\newcommand{\by}{\boldsymbol{y}}
\newcommand{\br}{\boldsymbol{r}}
\newcommand{\bd}{\boldsymbol{d}}
\newcommand{\bl}{\boldsymbol{l}}

\newcommand{\calT}{\mathcal{T}}

\begin{document}

\title{Euler Cutting Planes for Fractional CVaR Programming}

\author{Pranay Seshadri \and Hubert Dzieciol \and Zhe Feng \and Alan Tua\\[0.5ex]
  Ki Insurance, London, United Kingdom\\
  \texttt{pranay.seshadri@ki-insurance.com}}

\date{}

\maketitle

\begin{abstract}
  We maximise expected reward per unit of \emph{marginal} tail risk, a reward-to-CVaR ratio against a baseline. \h{The standard Dinkelbach route re-solves a Rockafellar--Uryasev LP or QP with one auxiliary variable per tail scenario at each multiplier update. Our method eliminates the multiplier iteration and keeps scenario auxiliaries out of the master, while still evaluating tail risk to generate cuts.} A Charnes--Cooper/Schaible homogenisation removes the reward/risk multiplier, and the remaining convex constraint is separated on demand: each cut coefficient is an Euler allocation vector, read off a ranking and an average. The result is a single convex program, finitely terminating at the global optimum, with size independent of $S$ --- an LP in $M{+}1$ variables, or a second-order cone program in $M{+}2$ for a concave quadratic numerator. The method runs $20$--$100\times$ faster than the exact Dinkelbach formulation and only needs single-digit cut counts.
\end{abstract}

\noindent\textbf{Keywords:} Conditional value-at-risk; Fractional programming;
Cutting-plane methods; Euler allocation; Portfolio optimisation

\section{Introduction}
\label{sec:intro}
A recurring question in risk-constrained
 decision-making boils down to efficiency: which decisions deliver the most expected reward per unit of risk. This paper considers problems of the form
\begin{equation}
\begin{aligned}
  \max_{\bx}\; &
  \frac{f\left( \bx \right) }{\Delta\CVaR_\alpha(\bx)}, \; \; \; \textrm{where}
  \qquad \\
  f(\bx) & \;=\; \br^\top\bx - \tfrac12\,\bx^\top\boldsymbol{\Sigma}\,\bx, \\
  \qquad \boldsymbol{\Sigma} & \succeq 0, \; \; \; \textrm{and} \\
  \Delta\CVaR_\alpha(\bx) & = \CVaR_\alpha(\bb + \bA\bx) - \CVaR_\alpha(\bb).
  \label{eq:ratio}
\end{aligned}
\end{equation}
Here the linear term in $f(\bx)$ represents the expected reward, and the quadratic term covers a variance or cost penalty. The optimisation variable $\bx$ is a per-decision fraction, i.e., $\bx \in \left[0, \mathbf{1}\right]^M$ capturing how much of the asset / risk / policy we want. In the denominator, $\bA \in \mathbb{R}^{S\times M}$ is a scenario loss matrix ($M$ decisions, $S$ scenarios), $\bb \in \mathbb{R}^S$ is the base loss vector. The $\Delta\CVaR_\alpha(\bx)$ term is the \emph{marginal} CVaR: the increase in tail risk caused by the decisions, relative to a fixed baseline state. \h{Even when $\boldsymbol{\Sigma}=0$, the baseline can give marginal CVaR affine pieces with nonzero offsets, so the scale of $\bx$ does not generally cancel from the ratio; the homogenisation below still applies.} This structure --- a concave numerator over a convex, piecewise-linear denominator --- arises across risk-constrained engineering, insurance and finance. In all of these the scenario count $S$ is large and fixed by a generator, while the number of decisions $M$ is moderate.

As \eqref{eq:ratio} is a fractional program, the standard solution is nested and involves Dinkelbach's algorithm \cite{Dinkelbach1967}, where an outer loop iterates the reward/risk multiplier $\lambda$, and for each $\lambda$ an inner solver minimises $\lambda\ \Delta\CVaR_\alpha(\bx) - f\left( \bx \right)$. \h{With CVaR expanded through the Rockafellar--Uryasev (R--U) linearisation \cite{RockafellarUryasev2000}, each LP/QP inner solve carries $S$ auxiliary variables and is repeated at successive multiplier values.}

\h{We eliminate the multiplier iteration and R--U scenario auxiliaries by solving one cutting-plane convex program with on-demand risk cuts.} The resulting proposed algorithm, which we call the Euler Cutting-Plane (ECP) method, is a single LP in $M{+}1$ variables when there is no quadratic term in $f\left( \bx \right)$ and a $M{+}2$ variables SOCP when $\boldsymbol{\Sigma}\neq0$ with cuts added until the active tail stabilises. It terminates finitely at the global optimum (Theorem~\ref{thm:ccecp}). The scenario count enters only through an $O(MS)$ loss update and an $O(S)$ top-$k$ ranking; the master problem itself never grows with $S$, distinguishing the approach from the classical R-U linearisation.


Two ingredients make this possible: a Charnes--Cooper/Schaible homogenisation \cite{CharnesCooper1962,Schaible1983} that eliminates the Dinkelbach multiplier, and on-demand Euler cuts --- supporting hyperplanes of the CVaR denominator whose coefficients are tail conditional means, available in closed form from a sort and an average. Neither is new in isolation, and the cut is in essence F\'abi\'an's \cite{Fabian2008} simplification of the K\"unzi-Bay--Mayer representation, restated as equations (33)--(34) of \cite{FabianMitraRoman2011}. \h{Their composition in \eqref{eq:homcut} is the contribution: homogenisation preserves the tail-set separation oracle, removes the multiplier, and gives a finitely terminating cutting-plane method without scenario auxiliaries.}

The remainder of this paper is organised as follows: Section~\ref{sec:prelim} fixes the empirical CVaR, its subgradient (the Euler allocation vector), and the cutting-plane representation we build on. 
Section~\ref{sec:related} reviews the related work and positions the contribution in this manuscript precisely.
Section~\ref{sec:ccecp} derives ECP and proves finite termination, and Section~\ref{sec:qpnum} extends it to concave quadratic numerators.
Section~\ref{sec:experiments} reports experiments, all for the linear numerator $\boldsymbol{\Sigma}=0$.
Section~\ref{sec:conclusion} concludes.

\section{Preliminaries}
\label{sec:prelim}
The empirical CVaR at confidence level $\alpha$ of a loss vector
$\bl \in \mathbb{R}^S$ is the mean of its worst $k$ entries,
\begin{equation}
  \CVaR_\alpha(\bl) = \frac{1}{k}\sum_{s \in \calT(\bl)} l_s,
  \qquad k = (1-\alpha)S,
  \label{eq:cvar}
\end{equation}
where $\calT(\bl)$ indexes the $k$ largest components. We assume throughout that $1/(1-\alpha)S$ is an integer; the empirical CVaR is then exactly the tail mean.

As a function of the fractional contribution of each decision $\bx$, $\CVaR_\alpha(\bb+\bA\bx)$ is convex and piecewise linear: it is the pointwise maximum, over all size-$k$ scenario subsets $\calT$, of the affine functions $\frac1k\sum_{s\in\calT}(\bb_s + (\bA\bx)_s)$. On the polyehdral, active tail set, $\calT$, the function's gradient is \emph{Euler allocation vector}, given by
\begin{equation}
  (\cbar_{\calT})_i := \frac{1}{k}\sum_{s\in\calT} (\bA)_{si},
  \qquad i=1,\ldots,M
  \label{eq:euler}
\end{equation}
This is the conditional mean of the loss columns over the tail scenarios. Tasche
\cite{Tasche2008} calls this tail conditional mean the per-unit Euler capital
contribution to expected shortfall (his equation (3.11b)); Denault
\cite{Denault2001} calls the stacked vector of such contributions an allocation,
following which we call \eqref{eq:euler} the \emph{Euler allocation vector}.
Giving $\calT_0 = \calT(\bb)$ for the baseline tail, the marginal CVaR admits the global lower bound
\begin{align}
  \Delta\CVaR_\alpha(\bx) \;\ge\; \cbar_{\calT}^\top \bx + C(\calT),
  \qquad \\ 
  C(\calT) \mathrel{:=} \tfrac1k\big(\textstyle\sum_{s\in\calT} b_s -
  \sum_{s\in\calT_0} b_s\big) \le 0,
  \label{eq:support}
\end{align}
for every tail set $\calT$, with equality when $\calT$ is the active tail at $\bx$. \h{The baseline offset $C(\calT)$ belongs to the affine piece; the inequality is the cut.} As $\calT_0$ maximises $\sum_{s\in\calT} b_s$, the base cut has $C(\calT_0)=0$.



\section{Related Work}
\label{sec:related}

The standard linear-programming treatment of CVaR is the lifting formulation of
Rockafellar and Uryasev \cite{RockafellarUryasev2000}, later developed for
portfolio optimisation in \cite{RUZ2006}: a VaR variable and one
shortfall variable per scenario convert the empirical CVaR objective or
constraint into a finite LP. This representation is exact, but its dimension
grows with the scenario count. A complementary line of work avoids carrying all
scenario auxiliaries in the master problem by using cutting-plane
representations. Klein Haneveld and Van der Vlerk \cite{KleinHaneveldVlerk2006}
developed such representations
for integrated chance constraints, which are closely related to SSD and tail-risk
constraints; K\"unzi-Bay and Mayer \cite{KunziBayMayer2006} obtained the
analogous representation independently for CVaR minimisation, applying it to the
objective function where \cite{KleinHaneveldVlerk2006} applied it to a
constraint. Roman, Darby-Dowman and Mitra
\cite{RomanDarbyDowmanMitra2006} introduced a multi-objective portfolio model for second order stochastic dominance (SSD) efficient portfolios in which tail/CVaR functionals enter the objectives via
the Rockafellar--Uryasev CVaR formula; subsequent work by F\'abi\'an, Mitra and
Roman \cite{FabianMitraRoman2011} processed this class of SSD/CVaR models using
cutting-plane representations. Hofmann \cite{Hofmann2014} applies the same
cutting-plane philosophy directly to profit maximisation with CVaR constraints,
adding violated risk cuts instead of solving the full Rockafellar--Uryasev
reformulation. All of these treat CVaR in an objective or a constraint; here it is the denominator of a ratio.

As mentioned in the Introduction~\ref{sec:intro}, there are two structural facts that permit our approach. First, \eqref{eq:ratio} is a nonlinear fractional program, and it is so for two independent reasons --- one in the denominator, one in the numerator. Consider the linear numerator $\boldsymbol{\Sigma}=0$ first. Were the denominator the \emph{unshifted} $\CVaR_\alpha(\bA\bx)$, both parts of the ratio would be positively homogeneous of degree one, the ratio itself homogeneous of degree zero, and the problem would collapse by normalisation to
$\max\{\br^\top\bx:\CVaR_\alpha(\bA\bx)\le1\}$ --- with no multiplier and no
transformation required. The fixed baseline $\bb$ destroys that homogeneity: the
affine pieces of $\Delta\CVaR_\alpha$ carry distinct intercepts $C(\calT)\le0$
(Section~\ref{sec:prelim}), the scale of $\bx$ does not cancel, and a Dinkelbach
loop becomes the standard recourse. When $\boldsymbol{\Sigma}\neq0$ the cancellation fails a
second time, now in the numerator: $f$ is not positively homogeneous, so the
scale of $\bx$ does not cancel even where the denominator is homogeneous, and the
ratio strictly decreases along any ray on which the base tail stays active
(see Remark~\ref{rmk:edge}(iii)). Either departure alone suffices to rule out the
normalisation shortcut.

Second, the Charnes--Cooper substitution removes the multiplier, and does so at no cost in the separation oracle, because homogenisation \emph{commutes with separation}: the perspective of a max-of-affines is the max of the same affine pieces with their intercepts scaled by $t$. Thus, the homogenised denominator constraint is again a finite family of \emph{linear} inequalities indexed by tail sets, whose most-violated member at $(\by,t)$ is found by ranking the augmented loss at the de-homogenised iterate $\bx=\by/t$. Two consequences follow.

The homogenised feasible region is a polyhedron whose facets are indexed by the \emph{same} finite set as in the unhomogenised problem, so Kelley's method \cite{Kelley1960} terminates \emph{finitely} here rather than merely converging (Theorem~\ref{thm:ccecp}); and the cut coefficients remain the Euler allocation vectors that practitioners already compute. Note that homogenising a concave-over-convex ratio through the perspective is textbook fractional programming
\cite{Schaible1983,BoydVandenberghe2004}.

Against this, the three nearest lines of work each stop one step short. Kapsos et
al.\ \cite{KapsosZymler2014} homogenise a reward-to-risk ratio with the same
Charnes--Cooper scaling variable used here, but they \emph{lift} the denominator
before homogenising: one shortfall variable per sample, with the lifted
denominator normalised to one. Charnes--Cooper is then applied to a ratio of
affine functions, and the resulting LP carries a variable and a row per scenario,
so its dimension grows with the sample count; there is no separation oracle, and
hence no finite-termination argument over tail sets. We instead homogenise the
max-of-affines directly and generate its facets on demand. The cutting-plane
papers above supply that oracle --- indeed \cite{Fabian2008,FabianMitraRoman2011}
supply essentially the cut we use --- but have no ratio to homogenise, so the
question of whether homogenisation disturbs the cut family does not arise for
them. Barros and Frenk \cite{BarrosFrenk1995} do combine fractional programming
with cutting planes, but in the \emph{parametric} space, and their result is an
identification rather than a removal: they show that their cutting-plane
algorithm \emph{reduces to} the classical Dinkelbach-type procedure for
generalized fractional programs, thereby giving that procedure a geometric
interpretation. Their cuts approximate
$\lambda\mapsto\max_{\bx}\{f(\bx)-\lambda\,\Delta\CVaR_\alpha(\bx)\}$ inside a
root-finding iteration, so the multiplier survives; here it is absent from the
start and the cuts accumulate monotonically into one program. We are not aware of
prior work that generates CVaR cuts for a homogenised fractional program.


\section{The Euler Cutting-Plane Method}
\label{sec:ccecp}

\subsection{Homogenisation}
Let
\begin{equation}
  t = \frac{1}{\Delta\CVaR_\alpha(\bx)} > 0,
  \qquad \by = t\,\bx,
\end{equation}
so that the objective of \eqref{eq:ratio} becomes the \emph{perspective} of $f$,
\begin{equation}
  \frac{f(\bx)}{\Delta\CVaR_\alpha(\bx)} \;=\; t\,f(\by/t)
  \;=\; \br^\top\by \;-\; \frac{\by^\top\boldsymbol{\Sigma}\by}{2\,t}.
  \label{eq:persp}
\end{equation}
Two properties then emerge. It is jointly \emph{concave} in
$(\by,t)$ on $t>0$, because $\by^\top\boldsymbol{\Sigma}\by/t$ is quadratic-over-linear, so
the master stays a convex program solved to global optimality at every cut; and
it is positively homogeneous of degree one, as perspectives always are, which is
what forces the relaxed denominator constraint to bind at the optimum and so
keeps the recovery $\bx^\star=\by^\star/t^\star$ valid
(Remark~\ref{rmk:edge}(iii)). The box scales with $t$: $\underline{\bx} \le \bx \le \overline{\bx}$ becomes
$\underline{\bx}\,t \le \by \le \overline{\bx}\,t$, so a nonzero floor such as a
minimum policy, or risk size, costs only one further linear constraint.
Throughout, bold $\overline{\bx},\underline{\bx}\in\mathbb{R}^M$ denote the vector
bounds and plain $\bar x,\underline x$ their common component when every decision
shares one cap or floor, so that $\overline{\bx}=\bar x\,\boldsymbol1$. The denominator constraint
$\Delta\CVaR_\alpha(\bx) \le 1/t$ homogenises the same way: the perspective of
the max-of-affines \eqref{eq:support} is the max of the same affine pieces with
their intercepts scaled by $t$ (\cite{BoydVandenberghe2004}, p.~89), hence the
finite family of \emph{linear} inequalities
\begin{equation}
  \cbar_{\calT}^\top \by + C(\calT)\, t \;\le\; 1
  \qquad\text{for every tail set } \calT.
  \label{eq:homcut}
\end{equation}
Collecting these gives the homogenised master
\begin{equation}
\begin{aligned}
  \max_{\by,\,t}\; \br^\top\by - \frac{\by^\top\boldsymbol{\Sigma}\by}{2\,t} \\
  \text{s.t.} \quad \cbar_{\calT}^\top \by + C(\calT)\,t \le 1 \;\;\forall \calT,\quad \\
  \underline{\bx}\,t \le \by \le \overline{\bx}\,t,\quad t \ge 0,
  \label{eq:homfull}
\end{aligned}
\end{equation}
over $(\by,t)\in\mathbb{R}^{M+1}$. The base-tail cut ($C(\calT_0)=0$, i.e.\ $\cbar_{\calT_0}^\top\by \le 1$) keeps
\eqref{eq:homfull} bounded.

\subsection{Linear and quadratic numerators}
\label{sec:qpnum}
Nothing above used linearity of the numerator: the homogenisation acts on the \emph{denominator}, and the cuts \eqref{eq:homcut} are generated from a tail
ranking that never sees $f$. Only the \emph{class} of the master depends on $\boldsymbol{\Sigma}$. For $\boldsymbol{\Sigma}=0$ the quadratic term drops and \eqref{eq:homfull} is a linear program in $M{+}1$ variables. \h{For $\boldsymbol{\Sigma}\neq0$, factor $\boldsymbol{\Sigma}=\boldsymbol{L}\boldsymbol{L}^\top$, with $\boldsymbol{L}\in\mathbb{R}^{M\times r}$ and $r=\operatorname{rank}\boldsymbol{\Sigma}$. For $t>0$, introduce the epigraph variable $s$ through}
\[
  s \ge \frac{\by^\top\boldsymbol{\Sigma}\by}{2t}
  \quad\Longleftrightarrow\quad
  \|\boldsymbol{L}^\top\by\|^2 \le 2ts.
\]
\h{With $t,s\ge0$ the right-hand inequality is a rotated second-order cone, so \eqref{eq:homfull} becomes}
\begin{align}
  \max_{\by,\,t,\,s}\; \br^\top\by - s \\
  \text{s.t.} \quad \cbar_{\calT}^\top \by + C(\calT)\,t \le 1 \;\;\forall \calT,\quad \\
  \|\boldsymbol{L}^\top\by\|^2 \le 2\,t\,s, \quad
  \underline{\bx}\,t \le \by \le \overline{\bx}\,t,\quad t \ge 0,\quad s \ge 0,
  \label{eq:homqp}
\end{align}
a SOCP in $M{+}2$ variables with a single cone of dimension
$r$. Either way the master's size is independent of the scenario count $S$.

The factor $\boldsymbol{L}$ is formed once, before the first cut, and never recomputed, so it sits outside the per-cut cost entirely (Algorithm~\ref{alg:ccecp}, line~1). As $\boldsymbol{\Sigma}$ is only semidefinite we
take it from a symmetric eigendecomposition $\boldsymbol{\Sigma}=\boldsymbol{Q}\Lambda \boldsymbol{Q}^\top$, setting
$\boldsymbol{L}=\boldsymbol{Q}_+\boldsymbol{\Lambda}_+^{1/2}$ over the positive eigenvalues, rather than by Cholesky,
which assumes definiteness and can break down when $\boldsymbol{\Sigma}$ is singular; this also delivers $r$, and hence the smallest cone, directly. Both cost $O(M^3)$
once, negligible for the moderate $M$ that the $M\ll S$ regime assumes. 

\subsection{Algorithm}
The master has one constraint per tail set (combinatorially many) but only
a handful are ever active, so ECP generates them on demand;
Algorithm~\ref{alg:ccecp} states the procedure with its per-line costs. Two
points deserve emphasis. First, the ranking is performed on the full augmented
loss $\bb + \bA\bx^j$, not on the baseline $\bb$: added positions reshape the
tail, and scenarios that were unremarkable for the baseline can come to dominate
the augmented tail, so the base tail $\calT_0$ anchors only the first cut
($\bx^0=\boldsymbol0$). Second, the stopping test (line~7) is exactly the test
that this migration has settled.

\begin{algorithm}[t]
\caption{Euler Cutting-Plane (ECP)}
\label{alg:ccecp}
\begin{algorithmic}[1]
\State \textbf{Initialise:} $\bx^0 \leftarrow \boldsymbol{0}$;
  $\mathrm{cuts} \leftarrow \varnothing$;\;\; $k = (1-\alpha)S$ is the tail size;\;
  $\bl^0 \leftarrow \bb$;\;\; $\calT^0 \leftarrow$ top-$k$ indices of $\bl^0$;\;
  if $\boldsymbol{\Sigma}\neq0$, factor $\boldsymbol{\Sigma} = LL^\top$
  \hfill\textit{// top-$k$ via partial selection, $O(S)$;
  factorisation once, $O(M^3)$, outside the loop}
\For{$j = 0,1,2,\ldots$}
  \State $\cbar^j \leftarrow \frac1k\sum_{s\in\calT^j}\bA_{s\cdot}$,\;
    $C^j \leftarrow \frac1k\big(\sum_{s\in\calT^j}b_s - \sum_{s\in\calT_0}b_s\big)$
    \hfill\textit{// $O(Mk)$, closed form, from $\bb$}
  \State add cut $(\cbar^j)^\top\by + C^j t \le 1$
  \State $(\by^{j+1}, t^{j+1}) \leftarrow \arg\max\,\br^\top\by
    \;-\;\by^\top\boldsymbol{\Sigma}\by/(2t)$
    s.t.\ all cuts, $\underline{\bx}t\le\by\le\overline{\bx}t$, $t\ge0$
    \hfill\textit{// LP by warm-started dual simplex if $\boldsymbol{\Sigma}=0$; else SOCP}
  \State $\bx^{j+1} \leftarrow \by^{j+1}/t^{j+1}$;\;\;
    $\bl^{j+1} \leftarrow \bb + \bA\bx^{j+1}$;\;\;
    $\calT^{j+1} \leftarrow$ top-$k$ indices of $\bl^{j+1}$
    \hfill\textit{// $O(MS)$ matvec $+$ $O(S)$ partial selection}
  \If{$\calT^{j+1} = \calT^j$}
    \textbf{return} $\bx^{j+1}$
    \hfill\textit{// tail unchanged: allocation is tail-consistent, hence optimal}
  \EndIf
  \State carry $(\bl^{j+1},\calT^{j+1})$ forward as $(\bl^{j},\calT^{j})$
    \hfill\textit{// avoids re-ranking $\bx^{j+1}$ a second time next iteration}
\EndFor
\end{algorithmic}
\end{algorithm}

Each iteration costs $O(MS)$ for the matrix vector operation plus $O(S)$ for the top-$k$ selection --- a partial selection (\texttt{numpy's argpartition}), not a full sort, since only an unordered tail set and its mean are needed. The master re-solve (line~5) is cheap next to the ranking. For $\boldsymbol{\Sigma}=0$ it adds one row to an unchanged objective, so a dual-simplex warm start from the previous optimal basis restores optimality in a handful of pivots
rather than refactorising from scratch. For $\boldsymbol{\Sigma}\neq0$ the conic master
\eqref{eq:homqp} is re-solved by an interior-point method that cannot exploit
that basis, so the per-cut cost rises by a constant factor while remaining
independent of $S$.

\subsection{Finite termination and optimality}
\begin{assumption}[Non-degeneracy]
\label{ass:A1}
At each iterate the $k$-th and $(k{+}1)$-th largest entries of
$\bb+\bA\bx$ are distinct, so the active tail is unique. This holds with
probability one for continuous loss distributions and can be enforced by an
$\varepsilon$-perturbation otherwise.
\end{assumption}

\begin{theorem}[Finite termination at the global optimum]
\label{thm:ccecp}
Under Assumption~\ref{ass:A1}, Algorithm~\ref{alg:ccecp} terminates after
finitely many iterations --- at most the number of distinct tail sets,
$\binom{S}{k}$ --- and returns a global optimum of \eqref{eq:ratio}.
\end{theorem}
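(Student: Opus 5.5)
The plan is to first show that the homogenised master \eqref{eq:homfull}, with \emph{all} tail cuts present, is an exact reformulation of \eqref{eq:ratio}, and then to treat Algorithm~\ref{alg:ccecp} as Kelley's method over a finite cut family.

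\emph{Exactness.} Since $\Delta\CVaR_\alpha(\bx)=\max_{\calT}\{\cbar_\calT^\top\bx+C(\calT)\}$ by \eqref{eq:support}, the constraint $\Delta\CVaR_\alpha(\by/t)\le 1/t$ for $t>0$ is equivalent to the family \eqref{eq:homcut}.
\begin{itemize}
\item \emph{From \eqref{eq:ratio} to the master.} For any feasible $\bx$ with $\Delta\CVaR_\alpha(\bx)>0$, the pair $(t\bx,t)$ with $t=1/\Delta\CVaR_\alpha(\bx)$ is feasible for the master. By \eqref{eq:persp} its objective equals the ratio.
\item \emph{From the master to \eqref{eq:ratio}.} Take any master point with $t>0$ and $f(\by/t)\ge0$. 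Let $\lambda=1/(t\,\Delta\CVaR_\alpha(\by/t))\ge1$. By positive homogeneity of the perspective, $\lambda(\by,t)$ is still feasible, its objective is scaled by $\lambda$, and the binding cut is active.
\end{itemize}
Hence at a master optimum the denominator constraint binds, i.e.\ $t^\star\Delta\CVaR_\alpha(\bx^\star)=1$, and the optimal values coincide, with $\bx^\star=\by^\star/t^\star$ optimal for \eqref{eq:ratio}. I will also need $t^\star>0$. This should follow from the box constraints, since $t=0$ forces $\by=0$ and objective $0$, and from positivity of the optimum. I would defer the edge cases (nonpositive optimum, $t\to0$) to Remark~\ref{rmk:edge}.

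\emph{Finite termination.} Algorithm~\ref{alg:ccecp} is Kelley's method on a relaxation that keeps only the cuts generated so far.
\begin{itemize}
\item Each relaxed master optimum $(\by^{j+1},t^{j+1})$ gives a value $v^{j+1}$ that upper-bounds the true optimum, because the relaxed feasible set contains the full one.
\item The new tail $\calT^{j+1}$ is the active tail at $\bx^{j+1}$, which is unique by Assumption~\ref{ass:A1}. So its cut is the maximal affine piece there, i.e.\ the most violated member of \eqref{eq:homcut} at $(\by^{j+1},t^{j+1})$.
\item If $\calT^{j+1}$ already appears among the generated cuts, it is satisfied. Then $t^{j+1}\Delta\CVaR_\alpha(\bx^{j+1})=\cbar^\top\by^{j+1}+C\,t^{j+1}\le1$, so the point is feasible for the full master and hence optimal there.
\item Otherwise a genuinely new tail set is added. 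Since there are only $\binom{S}{k}$ tail sets, this happens finitely often.
\end{itemize}

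\emph{Main obstacle.} The stopping test in line~7 only checks $\calT^{j+1}=\calT^j$, not membership in the whole cut list. I must therefore show that whenever the new tail was generated at some earlier step, it actually equals $\calT^j$, or else argue that a repeated earlier tail still certifies optimality.

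The first thing I would try is the following. Suppose $\calT^{j+1}=\calT^i$ with $i<j$. Then the cut for $\calT^{i}$ is satisfied at iterate $j+1$, so by the previous paragraph $\bx^{j+1}$ is already globally optimal. Every later iterate then keeps the same optimal value; using uniqueness from Assumption~\ref{ass:A1}, and if necessary a unique-optimum tie-breaking in the master, the next iterate repeats the tail and the test fires one step later.

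Each loop iteration produces a tail distinct from all earlier ones until optimality is reached. This gives the bound of at most $\binom{S}{k}$ iterations (plus a constant), and the returned point is tail-consistent and thus globally optimal by exactness.
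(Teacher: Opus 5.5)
Your proof is correct and follows the paper's skeleton: Kelley's method on the finite polyhedron \eqref{eq:homfull}, a tail-consistent iterate is feasible for the full master and hence optimal, and otherwise a tail set is consumed. Where you differ is the non-terminating step, and there your argument is the more rigorous one.

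The paper asserts that whenever $\calT(\bb+\bA\bx^{j+1})\neq\calT^j$ the revealed tail gives a cut violated at $(\by^{j+1},t^{j+1})$, and is therefore new. That fails in exactly the case you isolate, $\calT^{j+1}=\calT^i$ with $i<j$: that cut is already in the master and is satisfied there. Your dichotomy counts distinct tails directly and needs no violation claim. If the tail is listed, $\bx^{j+1}$ is already optimal; if it is unlisted, a new set is added. It also shows correctly that the line-7 test can miss an optimal iterate for one round.

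The cost is a hypothesis the paper never states. Adding a duplicate row leaves the master unchanged, so you need line~5 to return the same optimiser, as a warm-started dual simplex does. Assumption~\ref{ass:A1} then makes the re-ranked tail identical. But it says nothing about uniqueness of the master optimum, so your ``if necessary'' tie-breaking is genuinely necessary. The bound you obtain is $\binom{S}{k}+1$ rather than $\binom{S}{k}$. Replacing the line-7 test by membership of $\calT^{j+1}$ in the current cut list would remove both the extra hypothesis and the $+1$.

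Your explicit exactness step, rescaling by $\lambda=1/(t\,\Delta\CVaR_\alpha(\by/t))\ge1$, is what actually justifies the paper's terse ``feasible for \eqref{eq:ratio} and, maximising the objective over a relaxation, globally optimal''. Deferring the cases $f\le0$ and $\Delta\CVaR_\alpha\le0$ to Remark~\ref{rmk:edge} matches the paper's own scope.
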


\begin{proof}
The feasible set of the full homogenised problem \eqref{eq:homfull} is the intersection of finitely many half-spaces \eqref{eq:homcut}, one per tail set. Algorithm~\ref{alg:ccecp} is Kelley's cutting-plane method
\cite{Kelley1960} on this polyhedron: at each step it solves a relaxation using a subset of the cuts, so its optimal value upper-bounds the true optimum. Let $\calT^{j}$ be the active tail at the recovered $\bx^{j+1}=\by^{j+1}/t^{j+1}$. If $\calT(\bb+\bA\bx^{j+1})=\calT^{j}$, the cut just added is the supporting hyperplane that is tight at $\bx^{j+1}$, so \eqref{eq:support} holds with equality and $\Delta\CVaR_\alpha(\bx^{j+1}) = (\cbar^{j})^\top\bx^{j+1}+C^{j} \le 1/t^{j+1}$: the point is feasible for \eqref{eq:ratio} and, maximising the objective over a relaxation, globally optimal. Otherwise the newly revealed tail $\calT(\bb+\bA\bx^{j+1})$ yields a cut violated by the current iterate (its CVaR exceeds the relaxation bound), so it differs from every previous cut and strictly shrinks the relaxation. Each non-terminating step thus consumes a distinct tail set; with $\binom{S}{k}$ tail sets of size $k$, termination is finite.
\end{proof}

Theorem~\ref{thm:ccecp} gives the ratio problem the same guarantee as the
constrained CVaR cutting plane: exact, finitely terminating, and no multiplier
iteration. The combinatorial bound is worst-case only; empirically (Section
\ref{sec:experiments}) the iterate count is single digits.

Moreover, every iterate  comes with a certificate: $\bx^{j+1}=\by^{j+1}/t^{j+1}$
satisfies the box, so its attained ratio $\phi(\bx^{j+1})$ lower-bounds
$\phi^\star$, while the master value upper-bounds it and decreases monotonically
as cuts accumulate. A certified gap is therefore available at every iterate, not
only on termination.

The proof is \emph{objective-agnostic}: it uses only that each master is a relaxation of \eqref{eq:homfull} (so its value upper-bounds the optimum), that
a tail-consistent iterate is feasible and attains that bound, and that any
other iterate reveals a violated cut and so consumes a distinct tail set.
No step refers to the objective \eqref{eq:persp} beyond its being the quantity
maximised, so the theorem covers the conic master \eqref{eq:homqp} and its
$\boldsymbol{\Sigma}=0$ linear case alike; the bound $\binom{S}{k}$ is likewise unchanged,
since it counts tail sets, not objective pieces.

\begin{remark}[Well-posedness and edge cases]
\label{rmk:edge}
Three conditions underlie the homogenisation. 
\begin{enumerate}
  \item \emph{Positive denominator:} The substitution $t = 1/\Delta\CVaR_\alpha(\bx)$ requires 
  \begin{equation}
  \Delta\CVaR_\alpha(\bx)>0,
  \end{equation} 
  which holds for any allocation that adds tail
risk. The only excluded point is the riskless direction
$\Delta\CVaR_\alpha(\bx)=0$, where the ratio is undefined. In that degenerate case the homogenised master \eqref{eq:homfull} is unbounded along $t\to\infty$,
which the solver flags directly --- it is a property of the instance, not a failure of the method, and signals that the reward can be earned at zero
marginal tail risk. 

\item \emph{Integer tail:} When $(1-\alpha)S$
is fractional, the boundary scenario carries a fractional Value-at-Risk weight; the cut then acquires the extra $z$-term of the general Rockafellar--Uryasev
representation \cite{Fabian2008}, and ECP applies unchanged with that cut.

\item \emph{A minimum $\bx$ is required when $\boldsymbol{\Sigma}\neq0$:} For a
\emph{strictly} concave numerator the box must exclude the origin, $\underline{\bx}>\boldsymbol0$; otherwise the supremum of \eqref{eq:ratio} is approached as $\bx\to\boldsymbol0$ and is not attained. The reason is the
degenerate cell of Section~\ref{sec:prelim}. Near the origin the active tail is the base tail, whose intercept vanishes, $C(\calT_0)=0$, so there $\Delta\CVaR_\alpha$ is not merely linear but positively homogeneous; along a
ray $\bx=\theta\bd$ the ratio is therefore
\begin{equation}
  \frac{f(\theta\bd)}{\Delta\CVaR_\alpha(\theta\bd)}
  \;=\; \frac{\br^\top\bd - \tfrac12\theta\,\bd^\top\boldsymbol{\Sigma}\bd}
             {\cbar_{\calT_0}^\top\bd},
  \label{eq:raydegen}
\end{equation}
which is \emph{strictly decreasing} in $\theta$ whenever $\boldsymbol{\Sigma}\bd\neq0$: the
reward and the marginal risk both scale linearly, while the quadratic penalty
is second order, so shrinking the allocation always improves the ratio. In
homogenised terms, $C(\calT_0)=0$ means the base cut $\cbar(0)^\top\by\le1$
never bounds $t$, so $t\to\infty$, the perspective term
$\by^\top\boldsymbol{\Sigma}\by/(2t)$ vanishes and the master degenerates to its $\boldsymbol{\Sigma}=0$
linear case. The implementation therefore rejects
$\underline{\bx}=\boldsymbol0$ for nonzero $\boldsymbol{\Sigma}$ rather than reporting a tolerance-dependent point for a supremum that is not attained. Any strictly positive floor $\underline{\bx}>\boldsymbol0$ bounds $\theta$ away from zero and restores attainment; the experiments of Section~\ref{sec:qpexp} use $\underline{x}=0.05$. The linear case is unaffected, since \eqref{eq:raydegen} is then constant in $\theta$.
\end{enumerate}
\end{remark}

\subsection{Relation to Dinkelbach}
For completeness, the classical alternative is Dinkelbach's algorithm
\cite{Dinkelbach1967}, shown in Algorithm~\ref{alg:dinkelbach}. It iterates
$\lambda_{j+1} = f(\bx_j) / \Delta\CVaR_\alpha(\bx_j)$ and solves
$\max_{\bx} f(\bx) - \lambda_j \Delta\CVaR_\alpha(\bx)$ each step.
Even when the inner maximisation is itself a cutting plane, the objective is
re-parameterised by an evolving $\lambda_j$, so cuts cannot simply accumulate
into one fixed program, and the method carries the convergence behaviour of the
outer fixed-point iteration.

\begin{algorithm}[t]
\caption{Dinkelbach with Rockafellar--Uryasev (baseline)}
\label{alg:dinkelbach}
\begin{algorithmic}[1]
\State \textbf{Initialise:} $\bx^0$ any strictly feasible point;
  $\lambda_0 \leftarrow f(\bx^0) / \Delta\CVaR_\alpha(\bx^0)$
\For{$j = 0,1,2,\ldots$}
  \State $\bx^{j+1} \leftarrow
    \arg\max_{\underline{\bx} \le \bx \le \overline{\bx}}\; f(\bx)
    - \lambda_j\,\Delta\CVaR_\alpha(\bx)$
    \hfill\textit{// R--U model: $M{+}S{+}1$ variables; LP if $\Sigma=0$, else QP}
  \If{$f(\bx^{j+1}) - \lambda_j\,\Delta\CVaR_\alpha(\bx^{j+1}) \le \varepsilon$}
    \State \textbf{return} $\bx^{j+1}$
    \hfill\textit{// inner value $\approx 0$: $\lambda_j$ is the optimal ratio}
  \EndIf
  \State $\lambda_{j+1} \leftarrow f(\bx^{j+1}) / \Delta\CVaR_\alpha(\bx^{j+1})$
    \hfill\textit{// update multiplier (superlinear convergence)}
\EndFor
\end{algorithmic}
\end{algorithm}

\section{Numerical Experiments}
\label{sec:experiments}

\subsection{Code and software}
All experiments were conducted on an M5 Max MacBook Pro with 64\,GB of memory. The optimisation code is written in Python~3.13 with \texttt{numpy} for the ranking and cut formation, and Gurobi~13.0.2 for every mathematical program. Both methods call the same solver through the same interface, so the comparisons that follow reflect the two formulations rather than two implementations. Each method is given the settings that suit it. For $\boldsymbol{\Sigma}=0$ the ECP master is re-solved by dual simplex with basis reuse across cut additions (\texttt{Method}~$=1$, \texttt{LPWarmStart}~$=2$); for $\boldsymbol{\Sigma}\neq0$ the conic master is left to the barrier method. The Rockafellar--Uryasev model is built once per instance and re-solved as only its objective coefficients change --- the quadratic penalty is $\lambda$-independent, so it stays a fixed part of the objective across Dinkelbach iterations --- and its quadratic variant uses primal simplex (\texttt{Method}~$=0$), which on these instances is markedly faster than the barrier method Gurobi selects automatically (Section~\ref{sec:qpexp}). Dual reductions are disabled throughout, so that an unbounded instance is reported as such rather than as infeasible-or-unbounded; this is what lets the riskless direction of Remark~\ref{rmk:edge}(i) be detected rather than inferred.

Unless stated otherwise each synthetic configuration is run over five
independently seeded instances with five timing repetitions apiece (nine for the case study of Section~\ref{sec:realdata}), and we report medians; all seeds are fixed, so each figure and table is reproducible from the accompanying scripts.

\subsection{Synthetic Problem}
\subsubsection{Setup}
\label{sec:setup}
We generate base losses $\bb$ from a factor model with $N=20$ independent exponential risk drivers, and a loss matrix $\bA$ in two regimes: \emph{correlated}, where each column shares the portfolio factor (sample correlation $\rho\approx0.7$), and \emph{mismatch}, where each column is driven by an independent factor. We use $M=8$, $\alpha=0.99$, $\bar{x}=0.3$, and rewards $\br$ drawn uniformly on $[0.04,0.12]$. Scenario counts range over $S\in\{500,\ldots,20\,000\}$ (multiples of $100$, so $k=(1-\alpha)S$ is integral). Sections~\ref{sec:results}--\ref{sec:realdata} use the linear numerator, $\boldsymbol{\Sigma}=0$; the concave quadratic case is benchmarked separately in Section~\ref{sec:qpexp}. We compare ECP against the exact optimum obtained by Dinkelbach with the full R--U inner LP. We take this as the baseline because it is the standard route for the reward-to-CVaR ratio.



\subsubsection{Results with $\boldsymbol{\Sigma} = 0$}
\label{sec:results}

\begin{figure}[t]
  \centering
  \includegraphics[width=\textwidth]{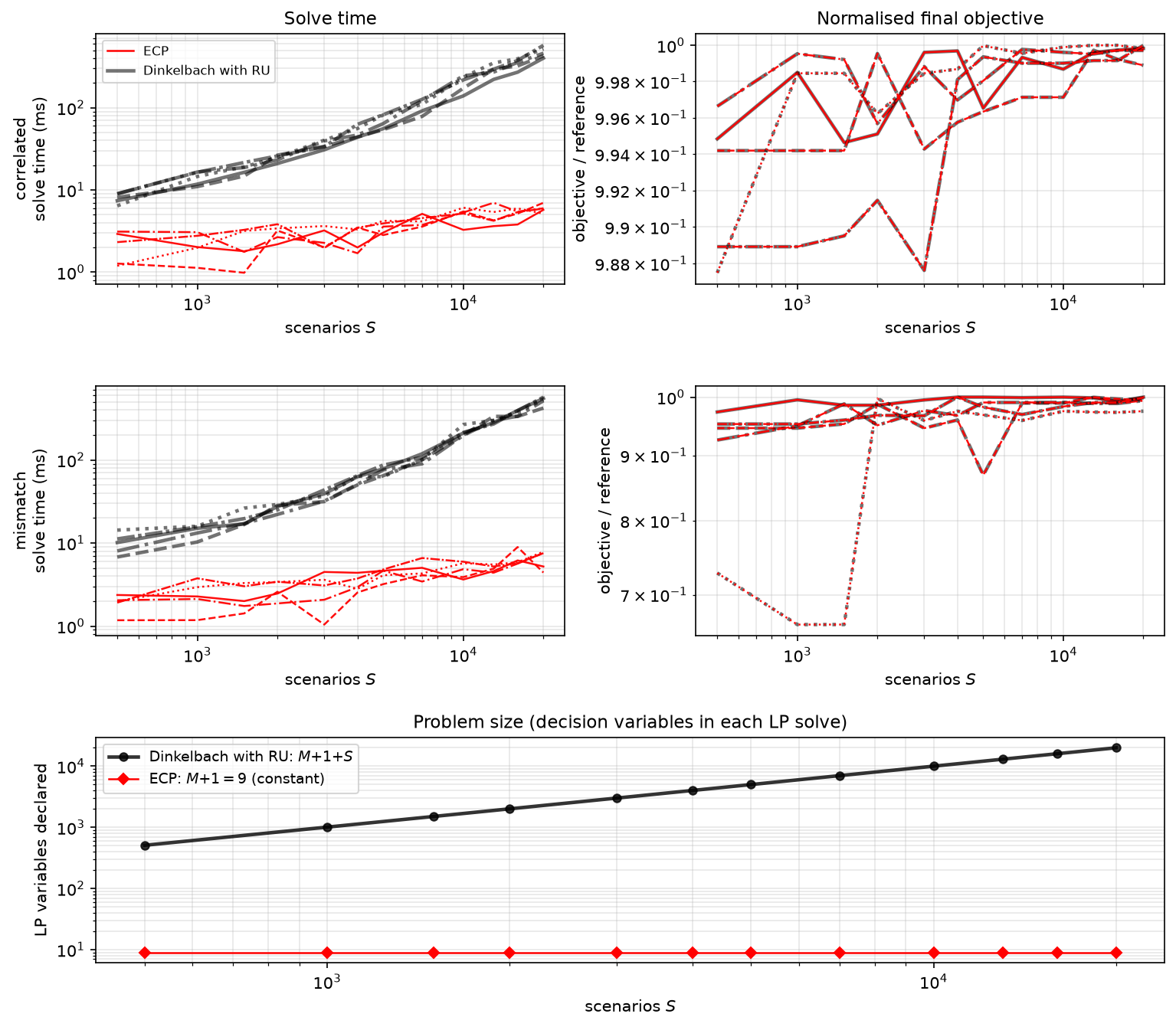}
  \caption{ECP (red) versus the exact Dinkelbach with RU  (black) algorithms on the reward-to-CVaR ratio \eqref{eq:ratio}. Five independent random instances are shown per panel (fixed seeds; each instance is one line style, matched between the red and black curves so the two methods can be compared instance by instance).
    \emph{Top/middle rows:} the correlated and mismatch regimes;
    \emph{Left column:} solve time vs.\ $S$ (log-log);
    \emph{Right column:} the final objective, normalised by each instance's $50\,000$-scenario reference optimum, and
    \emph{Bottom panel:} the number of decision variables in each LP solve.}
  \label{fig:ratio}
\end{figure}

In terms of exactness, the normalised final objective of ECP tracks that of the Dinkelbach with RU solver throughout the fresh sweep (right column of Fig.~\ref{fig:ratio}). On speed, the structural difference of the bottom panel makes ECP markedly cheaper, and the advantage widens with $S$. 

To study the effect on increasing the number of decision variables, $\boldsymbol{x}$, our next synthetic experiment fixes $S=10\,000$ and varies the number of decisions
$M$ from $2$ to $128$. The results are reported in Figure~\ref{fig:mscaling} fixes.  ECP's cut count stays in single digits ($K=5$--$8$ even at $M=128$) and its solve time grows slowly, because the homogenised master remains an $(M{+}1)$-variable LP and the closed-form Euler cut is an $O(MS)$ average; the cut count does not blow up with the decision dimension.
The Dinkelbach+R--U baseline, by contrast, pays $M{+}1{+}S$ variables at every inner solve throughout.

\begin{figure}[t]
  \centering
  \includegraphics[width=\textwidth]{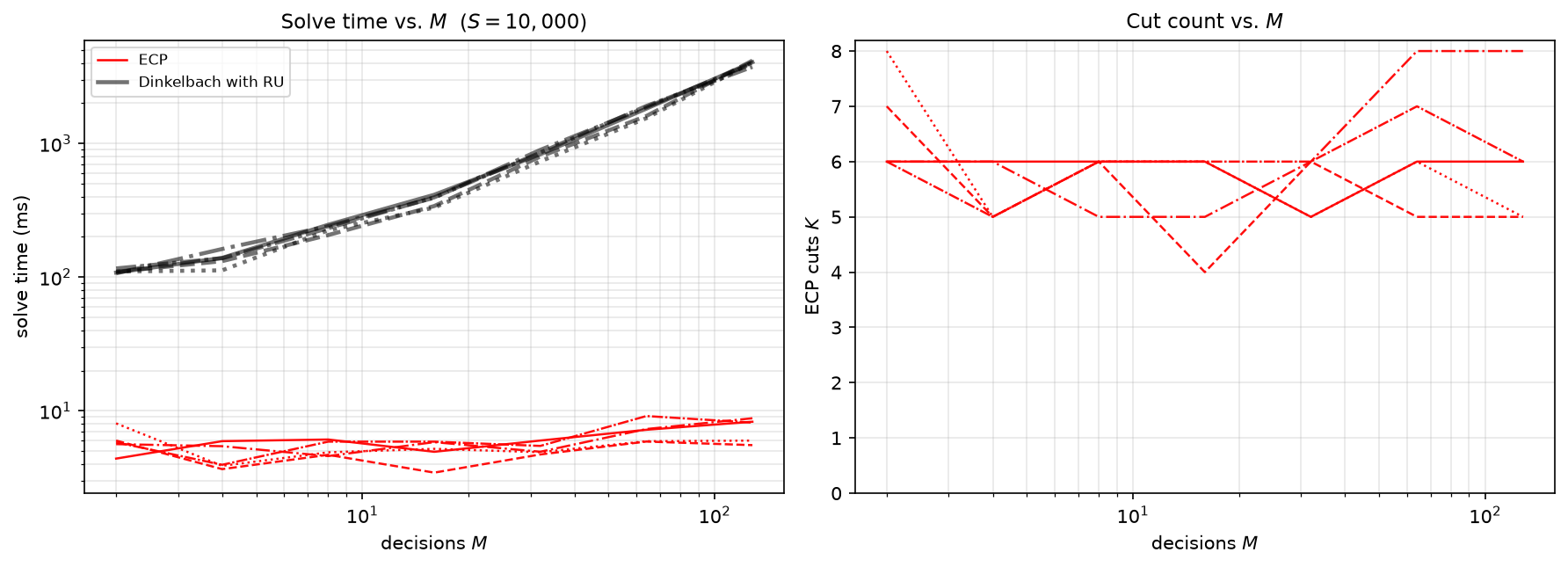}
  \caption{Scaling in the decision dimension $M$ at fixed $S=10\,000$ (mismatch regime, five fixed-seed instances per line style).
    \emph{Left:} solve time vs.\ $M$, and 
    \emph{Right:} the ECP cut count $K$ remains single-digit across the whole range.}
  \label{fig:mscaling}
\end{figure}

\subsubsection{Results with $\boldsymbol{\Sigma} \neq 0$}
\label{sec:qpexp}
We close the synthetic experiments by exercising the extension of
Section~\ref{sec:qpnum}. The setup is that of Section~\ref{sec:setup} ($M=8$, $\alpha=0.99$, mismatch regime, five fixed-seed instances) with the numerator replaced by $f(\bx)=\br^\top\bx-\tfrac12\bx^\top\boldsymbol{\Sigma}\bx$. We draw $\boldsymbol{\Sigma}=\kappa \boldsymbol{BB}^\top\!/M$ with $\boldsymbol{B}$ standard normal (full rank almost surely) and fix $\kappa$ through a dimensionless \emph{curvature} $\gamma$, defined by $\tfrac12\overline{\bx}^\top\boldsymbol{\Sigma}\,\overline{\bx} = \gamma\,\br^\top\overline{\bx}$ at the cap $\overline{\bx}=\bar x\,\boldsymbol1$. Here $\gamma$ is the fraction of the linear reward cancelled by the penalty at full allocation, and $\gamma=0$ recovers the linear problem exactly. Following Remark~\ref{rmk:edge}(iii) the box carries a minimum line size $\underline{x}=0.05$; the scenario count is capped at $S=10\,000$.

As in the linear case the Rockafellar--Uryasev model is built once and re-solved with only its linear objective coefficients
changing. Gurobi's automatic algorithm choice for a QP is a barrier method, which cannot exploit that warm basis and is a poor fit here; we therefore solve the QP baseline with primal simplex, which on these instances is uniformly faster. Figure~\ref{fig:quadratic} reports the outcome. 

\begin{figure}[t]
  \centering
  \includegraphics[width=\textwidth]{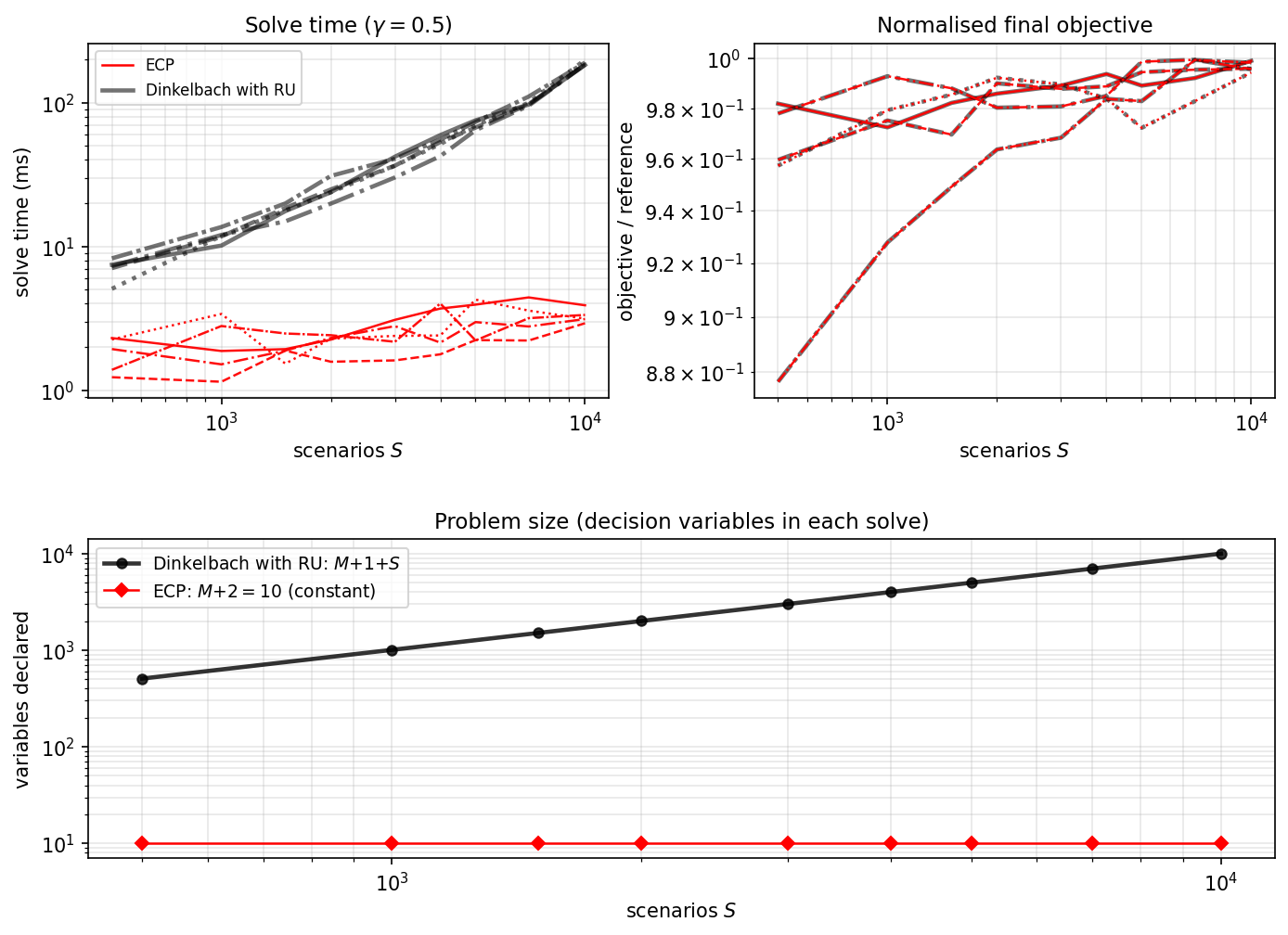}
  \caption{Concave quadratic numerator $f(\bx)=\br^\top\bx-\frac12\bx^\top\boldsymbol{\Sigma}\bx$ with $\boldsymbol{\Sigma}\succeq0$ full rank ($M=8$, $\alpha=0.99$, mismatch regime, minimum line $\underline{x}=0.05$, five fixed-seed instances per line style).
    \emph{Top left:} solve time vs.\ $S$ at curvature $\gamma=0.5$;
    \emph{Top right:} the final objective of each method, normalised by that instance's reference optimum --- the exact ratio attained on a
    $S_{\mathrm{ref}}=10\,000$ scenario set;
    \emph{Bottom:} the number of decision variables each formulation declares.}
  \label{fig:quadratic}
\end{figure}



\subsection{Real-data case study}
\label{sec:realdata}
We close with a case study on a real specialty-insurance portfolio. The base loss vector $\bb$ is a catastrophe scenario set of $S=10\,000$ simulated annual losses for wind storms for an in-force book (aggregate gross loss per year); its $1$-in-$100$ tail risk is $\CVaR_{0.99}(\bb)\approx\$196\text{M}$. Three candidate policy lines (A, B, C) are available to add; each carries its own scenario loss column, aligned to the same $10\,000$ years, and a per-unit reward $r_i = \text{premium}_i\,(1-\text{loss ratio}_i)$. The variable $\boldsymbol{x}$ represents the fraction of each policy---both its per-unit reward and its lossess---that an insurer stands to inherit. This idea of an insurer taking up a fraction of the premium and thus losses associated with a particular policy is standard within specialty insurance. 

Policy~A is by far the largest (premium $\approx\$63$M, $r_{\mathrm A}\approx \$44$M) with B and C two orders of magnitude smaller. The decision $x_i$ is the signed line size, constrained to a realistic band $x_i\in[0.01,0.10]$ (a nonzero minimum line and a $10\%$ cap), and we optimise the reward-to-marginal-CVaR ratio~\eqref{eq:ratio} at $\alpha=0.99$. As-is (stress factor $c=1$ in Table~\ref{tab:realdata}), the three lines are small relative to the book's $\$196$M tail: even at the $10\%$ cap their losses never enter the $100$ worst years, so no scenario migrates, the marginal risk is exactly linear, and the ratio is scale-invariant and maximised at a box vertex (Line~A to its cap, B and C at the floor). ECP certifies this in a \emph{single} cut. To exercise the tail-migration regime we scale the three loss columns by a stress factor $c$ (representing materially larger prospective lines, or equivalently a smaller, more concentrated book). 

For $c\ge20$ the scaled lines break into the $1$-in-$100$ tail; the denominator becomes genuinely nonlinear, the optimal Line~A allocation pulls back below its cap, and ECP accumulates several Euler cuts as the tail migrates.\footnote{Real losses carry large monetary units ($\sim\!10^8$); the homogenised master is solved on losses internally rescaled by the base risk so the LP stays well conditioned. Line sizes $\bx=\by/t$ are dimensionless and the reported ratio is evaluated in
original units, so the rescaling affects neither the solution nor the
comparison.} 

Table~\ref{tab:realdata} reports both methods across the stress range. In every case the two solvers return the \emph{same} optimum: the ratios agree to machine precision and the recovered line sizes to $|\Delta\bx^\star|<10^{-13}$ (Table~\ref{tab:realdata_x}), confirming Theorem~\ref{thm:ccecp} on real data. ECP terminates in $K=1$--$6$ cuts against $3$--$6$ Dinkelbach outer iterations, and is roughly $21$--$39\times$ faster (median wall-clock). The absolute times are only tens of milliseconds here because $M=3$ is small.

\begin{table}[t]
\centering
\caption{Real-data case study: Times are
  median wall-clock over ten repetitions.}
\label{tab:realdata}
\begin{tabular}{r r r r r r}
\toprule
 & \multicolumn{2}{c}{ECP (proposed)} & \multicolumn{2}{c}{Dinkelbach with RU} & \\
\cmidrule(lr){2-3}\cmidrule(lr){4-5}
$c$ & Objective & $t$\,(ms) & Objective & $t$\,(ms) & speedup \\
\midrule
$1$    & $43.87549$ & $1.35\pm0.11$ &  $43.87549$ & $52.58\pm1.07$ & $39.1\times$ \\
$20$   & $2.18453$ & $2.88\pm0.20$ &  $2.18453$ & $79.37\pm0.64$  & $27.5\times$ \\
$100$  & $0.43250$ & $3.42\pm0.11$ & $0.43250$ & $98.70\pm3.48$  & $29.1\times$ \\
$200$  & $0.21528$ & $4.03\pm0.35$ & $0.21528$ & $87.09\pm1.08$  & $21.6\times$ \\
\bottomrule
\end{tabular}
\end{table}

\begin{table}
\centering
\caption{Optimal line sizes $\bx^\star$ recovered by each method for the case
  study of Table~\ref{tab:realdata}.}
\label{tab:realdata_x}
\begin{tabular}{r r r r r r r}
\toprule
 & \multicolumn{3}{c}{ECP $\bx^\star$} & \multicolumn{3}{c}{Dineklbach with R--U $\bx^\star$} \\
\cmidrule(lr){2-4}\cmidrule(lr){5-7}
$c$ & A & B & C & A & B & C \\
\midrule
$1$   & $0.1000$ & $0.0100$ & $0.0100$ & $0.1000$ & $0.0100$ & $0.0100$ \\
$20$  & $0.0312$ & $0.0100$ & $0.0100$ & $0.0312$ & $0.0100$ & $0.0100$ \\
$100$ & $0.0349$ & $0.0100$ & $0.0100$ & $0.0349$ & $0.0100$ & $0.0100$ \\
$200$ & $0.0197$ & $0.0100$ & $0.0100$ & $0.0197$ & $0.0100$ & $0.0100$ \\
\bottomrule
\end{tabular}
\end{table}

\section{Conclusion}
\label{sec:conclusion}
We have shown that the reward-to-CVaR ratio problem, usually solved by a nested Dinkelbach iteration, collapses into a single cutting-plane linear program. The Charnes--Cooper homogenisation removes the reward/risk multiplier, and the convex CVaR denominator is represented by supporting hyperplanes whose coefficients are the Euler allocation vectors practitioners already compute. The resulting method has $M{+}1$ variables independent of the scenario count, terminates finitely at the global optimum, and --- with both methods solved by the same commercial solver and the baseline warm-started --- is roughly $20$--$100\times$ faster than the full Rockafellar--Uryasev route. 

Both ingredients are individually classical; their combination for a CVaR denominator appears to be new, and it gives the ratio problem the same clean guarantee long available for the constrained case. Because the construction acts entirely on the denominator, any concave numerator is handled by the same cuts and the same termination guarantee, at the cost of a second-order cone master in place of the linear one.

Open directions include a tighter iteration bound (the iterate count is single-digit and nearly dimension-free empirically, but we have no matching upper bound beyond the combinatorial one), distributionally robust variants via a Wasserstein denominator, and integer lot-size constraints through branch-and-bound on the homogenised relaxation. A further direction, worked out for the independent case in Appendix~\ref{app:multiperil}, is the aggregation of several perils---required when the losses cover not just wind (as per Section~\ref{sec:realdata}) but also earthquakes, floods, and other natural modelled hazards---whose scenario sets share no common index: the marginal-CVaR denominator is then taken over the \emph{convolution} of the per-peril loss tables, and we show the ECP cut can still be generated exactly without ever materialising the product scenario set.

\appendix
\section{Appendix}
\subsection{Exact multi-peril convolution via threshold ranking}
\label{app:multiperil}
The method extends to several perils whose scenario sets are \emph{not} aligned by a common index --- for example a wind catastrophe table and an earthquake table, each generated by its own simulation with its own year identifiers. This is standard within the disaster / catastrophe modelling community. When the perils are independent and each table is equiprobable, the aggregate (annual) loss is the \emph{convolution} of the per-peril losses, and the marginal-CVaR denominator of \eqref{eq:ratio} must be evaluated over that convolution. We show the Euler cut of Section~\ref{sec:ccecp} is still available in closed form, at essentially the single-peril cost.

We state it for two perils; the extension to a fixed number is immediate. Let peril $w$ have loss table $(\bb_w,\bA_w)\in\mathbb{R}^{S_w}\times\mathbb{R}^{S_w\times M}$ and peril $q$ have $(\bb_q,\bA_q)\in\mathbb{R}^{S_q}\times\mathbb{R}^{S_q\times M}$, so at an allocation $\bx$ the per-peril losses are $w_i(\bx)=b_{w,i}+\bA_{w,i\cdot}\bx$ and $q_j(\bx)=b_{q,j}+\bA_{q,j\cdot}\bx$. Under independence with equiprobable scenarios, every pair $(i,j)$ is an equally likely combined scenario with loss $w_i(\bx)+q_j(\bx)$, so the convolution is the Cartesian product: a scenario set of size $S=S_wS_q$ with loss vector $L_{ij}(\bx)=w_i(\bx)+q_j(\bx)$ and tail size $k=(1-\alpha)S$. Since this is again a single empirical CVaR, Algorithm~\ref{alg:ccecp} applies verbatim once its cut coefficient $\cbar$ and constant $C$ are computed over the product tail $\calT=\{(i,j):L_{ij}(\bx)\ge v_k\}$, where $v_k$ is the $k$-th largest combined loss. The only obstacle is that $S=S_wS_q$ may be enormous ($10^8$ for two tables of $10^4$ years); we now avoid it entirely with the methodology detailed below.

Fix $\bx$ and write $w_i,q_j$ for the per-peril losses. For a candidate VaR
level $v$, the number of combined scenarios exceeding it is
\begin{equation}
  N(v) \;=\; \#\{(i,j): w_i+q_j\ge v\}
        \;=\; \sum_{i=1}^{S_w}\#\{j: q_j\ge v-w_i\},
  \label{eq:countoracle}
\end{equation}
which is nonincreasing in $v$ and, having sorted $q$ once, costs
$O(S_w\log S_q)$ per evaluation. The combined VaR is the $k$-th largest pairwise
sum, recovered by bisection on the scalar $v$,
\begin{equation}
  v_k \;=\; \max\{v : N(v)\ge k\},
  \label{eq:vk}
\end{equation}
in a fixed number of steps. With $v_k$ in hand, define the per-scenario tail
multiplicities --- how many product-tail pairs each per-peril scenario belongs to,
\begin{equation}
  n_i \;=\; \#\{j: q_j\ge v_k-w_i\},
  \qquad
  m_j \;=\; \#\{i: w_i\ge v_k-q_j\}.
  \label{eq:counts}
\end{equation}
Both count the same tail set grouped differently, so
$\sum_i n_i=\sum_j m_j=|\calT|=k$. Because the combined loss is separable, the
tail sum collapses to two per-peril weighted sums,
\begin{equation}
  \sum_{(i,j)\in\calT}\!\big(w_i+q_j\big)
  \;=\;\sum_i n_i\,w_i+\sum_j m_j\,q_j,
  \label{eq:collapse}
\end{equation}
and the identical grouping applied to the loss-table rows gives the Euler cut
coefficient and constant:
\begin{align}
  \cbar \;&=\; \frac1k\Big(\textstyle\sum_i n_i\,\bA_{w,i\cdot}
                          +\sum_j m_j\,\bA_{q,j\cdot}\Big),
  \label{eq:convcbar}\\
  C \;&=\; \frac1k\Big(\textstyle\sum_i n_i\,b_{w,i}+\sum_j m_j\,b_{q,j}\Big)
          \;-\;\CVaR_\alpha^{\mathrm{conv}}(\bb_w,\bb_q),
  \label{eq:convC}
\end{align}
where the subtracted term is the base-tail constant, i.e.\ the convolution CVaR
of the two baseline tables (evaluated by the same routine at $\bx=\boldsymbol0$),
so that $C(\calT_0)=0$ as in Section~\ref{sec:prelim}.

\begin{algorithm}[t]
\caption{Exact convolution Euler cut for two independent perils}
\label{alg:convcut}
\begin{algorithmic}[1]
\Require per-peril losses $w_i=\bb_{w,i}+\bA_{w,i\cdot}\bx$,
  $q_j=\bb_{q,j}+\bA_{q,j\cdot}\bx$;\; $k=(1-\alpha)S_wS_q$
\State sort $q$ ascending \hfill\textit{// $O(S_q\log S_q)$; reused each bisection step}
\State bisect $v$ on $[\,w_{\min}+q_{\min},\,w_{\max}+q_{\max}\,]$ using
  $N(v)$ of \eqref{eq:countoracle} to solve \eqref{eq:vk} for $v_k$
  \hfill\textit{// $O(S_w\log S_q)$ per step}
\State $n_i\leftarrow\#\{j:q_j\ge v_k-w_i\}$,\;
  $m_j\leftarrow\#\{i:w_i\ge v_k-q_j\}$ \hfill\textit{// eq.~\eqref{eq:counts}}
\State \textbf{return} $\cbar,C$ from \eqref{eq:convcbar}--\eqref{eq:convC}
  \hfill\textit{// feed to Algorithm~\ref{alg:ccecp} unchanged}
\end{algorithmic}
\end{algorithm}

Each cut therefore costs $O\big((S_w+S_q)\log(S_w S_q)\big)$ --- the same $O(S\log S)$ ranking as the single-peril case --- while the LP stays at $M{+}1$
variables. Finite termination is inherited unchanged: the convolution feasible set is again an intersection of finitely many half-spaces, one per distinct product-tail set (at most $\binom{S_wS_q}{k}$), so the proof of
Theorem~\ref{thm:ccecp} applies with $\binom{S}{k}$ replaced by $\binom{S_wS_q}{k}$.

Two caveats delimit the scope. First, \eqref{eq:countoracle} exploits independence and equal scenario weights; a dependence structure between perils, or non-uniform scenario probabilities, breaks the product form. Second, the $P$-peril generalisation turns the scalar bisection \eqref{eq:vk} into a $P$-dimensional threshold search, which loses its edge as $P$ grows. Outside these conditions the general fallback is to fix a Monte-Carlo sample of the convolution once, before optimisation, and run Algorithm~\ref{alg:ccecp} on that frozen synthetic table --- correct for arbitrary dependence (sampled from a copula) and any number of perils, at the cost of $O(1/\sqrt{k})$ tail sampling
error.

\subsection{Adding a CVaR budget constraint}
\label{app:budget}

Practitioners rarely maximise a reward-to-risk ratio in isolation: the allocation must also respect an absolute tail-risk budget on the resulting book. We show that a constraint of the form
\begin{equation}
  \CVaR_\alpha(\bb + \bA\bx) \;\le\; B
  \label{eq:budget}
\end{equation}
is absorbed into the Euler cutting-plane method at the cost of \emph{one extra LP row per iteration and no extra variables}, because the constraint is generated
from the same tail ranking that already produces the objective cut. We then contrast this with the Rockafellar--Uryasev route, and make precise when that
route does, and does not, pay for additional scenario variables.

The constraint functional in \eqref{eq:budget} is the \emph{same} empirical CVaR of the \emph{same} augmented loss $\bb+\bA\bx$ that appears in the ratio denominator. Using the max-of-affines representation of Section~\ref{sec:prelim},
$\CVaR_\alpha(\bb+\bA\bx)=\max_\calT\{\bar b_\calT + \cbar_\calT^\top\bx\}$ with
$\bar b_\calT := \tfrac1k\sum_{s\in\calT}b_s$, the budget \eqref{eq:budget} is
equivalent to the (combinatorial) family of linear inequalities
\begin{equation}
  \bar b_\calT + \cbar_\calT^\top\bx \;\le\; B \qquad \text{for all tail sets }\calT.
  \label{eq:budgetfamily}
\end{equation}
These are handled by exactly the same on-demand generation as the denominator cuts. Homogenising through $t=1/\Delta\CVaR_\alpha(\bx)>0$, $\by=t\bx$ and
multiplying \eqref{eq:budgetfamily} by $t$ gives, for every tail set $\calT$, the single homogeneous row
\begin{equation}
  \cbar_\calT^\top\by + \big(\bar b_\calT - B\big)\,t \;\le\; 0.
  \label{eq:budgethom}
\end{equation}
Crucially, $\bar b_\calT$ is not a new quantity: from the intercept definition \eqref{eq:support}, $\bar b_\calT = C(\calT) + \CVaR_\alpha(\bb)$, so \eqref{eq:budgethom} reuses the coefficient $\cbar_\calT$ and constant $C(\calT)$ that Algorithm~\ref{alg:ccecp} already forms on line~3, together with the fixed base risk $\CVaR_\alpha(\bb)$ and the budget $B$. Concretely, the only change to
Algorithm~\ref{alg:ccecp} is to append, alongside the objective cut on line~4, the budget row
\begin{equation}
  (\cbar^j)^\top\by + \big(C^j + \CVaR_\alpha(\bb) - B\big)\,t \;\le\; 0.
  \label{eq:budgetline}
\end{equation}
Both cuts key off the identical active tail $\calT^j$ of $\bb+\bA\bx^j$: the $O(S\log S)$ ranking and the $O(MS)$ coefficient formation are performed \emph{once} and shared. The LP stays at $M{+}1$ variables and $S$ never enters its dimension.

The augmented feasible set is still an intersection of finitely many half-spaces, so Algorithm~\ref{alg:ccecp}
remains Kelley's method on a polyhedron and Theorem~\ref{thm:ccecp} applies with $\binom{S}{k}$ replaced by (at most) $2\binom{S}{k}$. The stopping test is unchanged: because objective and budget cuts share the tail of $\bb+\bA\bx$, when the tail stabilises ($\calT(\bb+\bA\bx^{j+1})=\calT^j$) both \eqref{eq:support} and \eqref{eq:budgetfamily} hold with equality at $\bx^{j+1}$, so the denominator linearisation \emph{and} the budget are exact simultaneously. One feasibility condition delimits scope: with a nonnegative box $\bx\ge\boldsymbol0$ and nonnegative marginal losses, the tail can only grow, so
\eqref{eq:budget} is attainable only for $B\ge\CVaR_\alpha(\bb)$; then the base point $\bx=\boldsymbol0$ ($\by=\boldsymbol0$) satisfies \eqref{eq:budgethom} for
every $\calT$ and the homogenised feasible set is nonempty.

\section*{Declarations}

\subsection*{Ethics approval and consent to participate}
\h{Not applicable. This study involves no human participants or animals.}

\subsection*{Consent for publication}
\h{Not applicable. The manuscript contains no individual person's data in any form.}

\subsection*{Funding}
\h{The authors received no specific funding for this work.}

\subsection*{Availability of data and materials}
\h{The synthetic experiments are generated by the code in the project repository, \url{https://github.com/Ki-Insurance/marginal-tvar-approx}, which also contains the scripts that reproduce the figures and tables. The real-data case study uses a proprietary catastrophe loss set and policy-line data belonging to Ki Insurance. These are commercially confidential and are not publicly available.}

\subsection*{Acknowledgements}
\h{Not applicable.}


\end{document}